\documentclass[10pt]{amsart}
\usepackage{amsmath,amscd}
\usepackage{amsbsy}
\usepackage{amssymb}
\usepackage{amscd,amsthm}
\usepackage[all,cmtip]{xy}
\usepackage[russian,english]{babel}
\usepackage[utf8]{inputenc}
\usepackage{xcolor}
\usepackage{dsfont}

\usepackage{hyperref}

\renewcommand{\epsilon}{\varepsilon}

\renewcommand{\ell}{x}

\newtheorem{thm}{Theorem}\numberwithin{thm}{section}
\newtheorem{lem}[thm]{Lemma}

\newtheorem{rema}[thm]{Remark}

\newtheorem*{con2}{Conjecture}

\newtheorem*{rema2}{Remark}

\begin{document}
	\begin{center}
		\huge{Products of factorials which are products of factorials}\\[1cm] 
	\end{center}
	\begin{center}
		\large{Sa$\mathrm{\check{s}}$a Novakovi$\mathrm{\acute{c}}$}\\[0,5cm]
		{\small February 2026}\\[0,2cm]
	\end{center}
	{\small \textbf{Abstract}. 
		In this note, we look at the diophantine equation
		$$
		\prod_{i=1}^ta_i!=\prod_{j=1}^sn_i!, \quad n_1\geq \cdots \geq n_s\geq 2 \quad  \textnormal{and}\quad n_1>a_1\geq a_2\geq\cdots \geq a_t\geq2.
		$$\\
		\noindent
	Let $s<t$. Under the (explicit) abc conjecture, we show that it has only finitely many nontrivial solutions in a certain subset of $\mathbb{N}^{t+s}$ of positive asymptotic density.}
	\begin{center}
		\tableofcontents
	\end{center}
	\section{Introduction}
	\noindent
	It is a prominent and long-standing unsolved problem whether the equation
	$$
	\prod_{i=1}^ta_i!=n! \quad n>a_1\geq a_2\geq \cdots \geq a_t\geq 2
	$$
	has only finitely many solutions. For a positive integer $n$ we write $P(n)$ for the largest prime factor of $n$. Erd\H{o}s and Graham \cite{EG} made the observation that if
	$$
	\frac{P(n(n+1))}{\mathrm{log}(n)}\longrightarrow \infty \quad \textnormal{as} \quad n\rightarrow \infty
	$$
	then the equation would have only finitely many solutions. A trivial solution has $a_1=n-1$. Thus, $n=a_2!\cdots a_t!$ and consequently, there are infinitely many trivial solutions. The Sur$\mathrm{\acute{a}}$nyi--Hickerson conjecture predicts that $16!=14!5!2!$ is the largest nontrivial solution and Nair and Shorey \cite{NS} showed that under the Baker's explicit abc conjecture, the equation has indeed only the nontrivial solutions 
	$$
	7!3!3!2!=9!, 7!6!=10!, 7!5!3!=10!, 14!5!2!=16!, 15!2!^4=16!
	$$ 
	The classical abc conjecture barely implies $P(n(n+1))\geq (1+o(1))\mathrm{log}(n)$ as $n\rightarrow \infty$, which is weaker the the estimate of Erd\H{o}s and Graham. However, Luca \cite{LUC} showed that the classical abc conjecture implies that 
	$$
	\prod_{i=1}^ta_i!=n! \quad n>a_1\geq a_2\geq \cdots \geq a_t\geq 2
	$$
	has only finitely many solutions. This result was sharpend by Luca, Saradha and Shorey \cite{LUC2} under Baker's explicit abc conjecture and explicit bounds for $a_2$ were obtained. In \cite{WT2} Takeda studied a generalization of the Sur$\mathrm{\acute{a}}$nyi--Hickerson conjecture and established all nontivial solutions to
	$$
	\prod_{i=1}^ta_i!=\prod_{j=1}^sn_i!, \quad n_1\geq \cdots \geq n_s\geq 2 \quad  \textnormal{and}\quad a_1\geq a_2\geq\cdots \geq a_t\geq2,
	$$
	with $a_i\neq n_j$ for any $i,j$ in the case of small $t$ and $s=2$, provided $\mathrm{max}\{n_1, a_1\}$ is bounded. Question 4.3 in \emph{loc. cite.} asks whether that equation has finitely many nontrivial solutions. We give a partial confirmative answer under the (explicit) abc conjecture. The proof is an adaptation of the proof given by Luca, Saradha and Shorey \cite{LUC2} and also avoids Mateev's result on linear forms in logarithms. Interestingly, it can be proved unconditionally, that the equation has finitely many trivial solutions if the factorials were replaced by an analogue notion of factorials which can be defined over number fields (see \cite{WTT} and \cite{WT}).\\
	\noindent
	Since we want to apply the (explicit) abc conjecture, we recall its statement. For a non-zero integer $a$, let $N(a)$ be the \emph{algebraic radical}, namely $N(a)=\underset{p\mid a}{\prod}{p}$.
	\begin{con2}[classical abc conjecture]
		For any $\epsilon >0$ there is a constant $K(\epsilon)$ depending only on $\epsilon$ such that whenever $a,b$ and $c$  are three coprime and non-zero integers with $a+b=c$, then 
		\begin{eqnarray*}
			c<K(\epsilon)N(abc)^{1+\epsilon}
		\end{eqnarray*}
		holds.
	\end{con2} 
	\noindent
	In 1975 Baker \cite{BAK} gave an explicit version of the abc conjecture. Then Laishram and Shorey \cite{LS}, Theorem 1 showed that Baker's version implies that
	$$
	c<N(abc)^{7/4}.
	$$ 
	We shall use this inequality in place of classical abc or Baker's explicit abc conjecture in proving our main result. We simply say \emph{explicit abc conjecture}. We now specify our problem. In this note, we focus on the equation
	$$
	\prod_{i=1}^ta_i!=\prod_{j=1}^sn_i!
	$$ 
	for $a_1\geq a_2\geq \cdots \geq a_t\geq 2$ and $n_1\geq n_2\geq \cdots \geq n_s\geq 2$ with $a_i\neq n_j$ for any $i,j$. We assume $s<t$. We say that a solution $(a_1,...,a_t,n_1,...,n_s)$ is trivial if there exists a pair $(i,j)$ such that $|a_i-n_j|=1$. We first deal with the case $s=2$. As the examples and calculations in the work of Takeda \cite{WT2} suggest, there are many nontrivial solutions with $n_1>a_1$ and $n_2>a_i$ for some $2\leq i\leq t$. So we may assume $n_1>a_1$ and $n_2>a_i$ for some $2\leq i\leq t$. Let $k_1=n_1-a_1$ and $k_2=n_2-a_i$. Note that $k_j\geq 2$ to exclude trivial solutions. Denote by $m_1=a_1+1$ and by $m_2=a_i+1$. Then we can rewrite the diophantine equation and obtain
	\begin{center}
	\begin{eqnarray}
	\underset{l\neq i,1 }{\prod}a_l!=\Delta(m_1,k_1)\Delta(m_2,k_2),
\end{eqnarray}
	\end{center}
	\noindent
	where $\Delta(m_i,k_i)=m_i(m_i+1)\cdots (m_i+k_i-1)$. Notice that $m_i+k_i-1=n_i$. Let $c>0$ be an arbitrary but fixed positive integer. For $2\leq i\leq t$, we define
	$$
	\mathcal{N}(c)_i=\{\textnormal{nontrivial solutions to (1)}\mid n_1>a_1, n_2>a_i, \quad k_2/k_1\leq c\}.
	$$
	Then let
	$$
	\bigcup_{i=2}^t\mathcal{N}(c)_i=\mathcal{N}(c).
	$$
	\begin{thm}
	The explicit abc conjecture implies that (1) has only finitely many solutions $(a_1,...,a_t,n_1,n_2)\in \mathcal{N}(c)_i$. Hence the explicit abc conjecture implies that
	$$
	\prod_{i=1}^ta_i!=n_1!n_2!
	$$
	has finitely many solutions $(a_1,...,a_t,n_1,n_2)\in \mathcal{N}(c)$.
	\end{thm}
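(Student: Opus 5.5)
The plan is to adapt the argument of Luca, Saradha and Shorey \cite{LUC2}, treating the two blocks $\Delta(m_1,k_1)$ and $\Delta(m_2,k_2)$ in (1) simultaneously and using $k_2\le ck_1$ to keep the second block under control. This is a sketch: the key smoothness estimate in the third step has not been checked.

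First, the prime support. Let $A=\max_{l\neq 1,i}a_l$, so $A\le a_2\le a_1<m_1$. Every prime dividing the left side of (1) is $\le A$. Hence $\Delta(m_1,k_1)$ and $\Delta(m_2,k_2)$ are both $A$-smooth. By Sylvester's theorem, a product of $k$ consecutive integers all exceeding $k$ has a prime factor $>k$. This gives $k_1<A$ and $k_2<A$, at least when $m_2>k_2$; otherwise we treat the case directly.

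Second, a size comparison via Legendre's formula. Take a prime $p\in(A/2,A]$. Its exponent on the left of (1) is at most $t-2$. On the right, $p$ divides at most $\lceil k_1/p\rceil+\lceil k_2/p\rceil\le 2+2c$ terms, and such primes must account for the whole $A$-smooth size of the right side. We compare $\log$ of both sides: the left is $\sum_{l\neq1,i}\log a_l!\le (t-2)A\log A$, while the right is at least $k_1\log m_1+k_2\log m_2$. The aim is an inequality of the shape $k_1\log m_1\ll_{c,t} A\log A$. This links $m_1$ to $A$, with $m_1$ at most exponential in $A/k_1$.

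Third, the abc step, used in place of linear forms in logarithms. Since $k_1\ge 2$, the block $\Delta(m_1,k_1)$ contains two consecutive integers $m_1+j$ and $m_1+j+1$. Following the Erd\H{o}s-type pigeonhole in \cite{LUC2}, we choose $j$ so that both terms have small $k_1$-smooth part. The product of the $k_1$-smooth parts of the block divides roughly $k_1!$ times a bounded factor, so some adjacent pair has $k_1$-smooth parts at most $k_1^{O(1)}$.

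We then apply the explicit abc inequality $c'<N(a'b'c')^{7/4}$ to $(m_1+j)+1=m_1+j+1$. The radical is at most $k_1^{O(1)}$ times the product of the primes in $(k_1,A]$ dividing these two terms. Bounding that product is the crux. I would use that each such prime divides only one term of the block to exponent $v_p$, and that the total multiplicity is constrained by $v_p(\text{LHS})\le (t-2)\,v_p(A!)$. The hope is to show that the two chosen terms are far from squarefree in the primes above $k_1$, so that the radical is at most $m_1^{\theta}$ with $\theta<4/7$. That would bound $m_1$, hence $a_1$, $n_1$ and $A$. If $m_2$ is large compared with $m_1$, the same argument applied to $\Delta(m_2,k_2)$ bounds $m_2$, with $k_2\le ck_1<cA$ replacing $k_1$.

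Finally, once $m_1$, $m_2$, $k_1$ and $k_2$ are bounded, $n_1$, $n_2$ and all the $a_l$ are bounded. Since $t$ is also bounded, because each $a_l!\ge 2$ divides a bounded quantity, only finitely many solutions lie in each $\mathcal N(c)_i$. Taking the union over $i$ gives the claim for $\mathcal N(c)$.

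I expect the main obstacle to be the radical estimate in the third step. The first two steps only say that the block is $A$-smooth with $A<m_1$, and this alone does not make the radical small. The saving exponent has to be extracted from the comparison with the left-hand factorials. It is exactly here that the hypotheses $s<t$ and $k_2/k_1\le c$ must enter, to prevent the second block from absorbing the high prime powers. If a direct bound fails, I would first try to choose $j$ so that $m_1+j$ is divisible by a large power of a small prime, which lowers the radical below $m_1^{4/7}$.
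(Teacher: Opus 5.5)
There is a genuine gap, and it sits where you suspected: step three never produces the radical bound. Your target is the right one, namely that the radical of the chosen pair is at most $m_1^{\theta}$ with $\theta<4/7$. But the routes you sketch toward it do not deliver it: local multiplicity constraints like $v_p(\mathrm{LHS})\le(t-2)v_p(A!)$, or forcing a high power of a small prime into $m_1+j$. The missing idea is to bound the radicals against $A$ by averaging over the whole block. Each prime $p>k_1$ divides at most one term of $\Delta(m_1,k_1)$, and every prime involved is $\le A$. Hence
$\prod_{j=0}^{k_1-1}N(m_1+j)\le\exp\bigl(1.00008A+O(k_1\log k_1)\bigr)$.
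Now take the two terms $m_1+j_1,m_1+j_2$ with the smallest radicals; they need not be adjacent. Apply abc to $\frac{m_1+j_1}{d}-\frac{m_1+j_2}{d}=\frac{j_1-j_2}{d}$ with $d=\gcd(m_1+j_1,j_1-j_2)$. This gives
$\log m_1\le\frac74\bigl(\frac{2.00016A}{k_1-1}+\frac{2k_1\log k_1}{k_1-1}+\log k_1\bigr)$,
that is, $k_1\log m_1\ll A+k_1\log k_1$. The $k_1\log k_1$ term is then absorbed by Erd\H{o}s's bound $A\ge P(\Delta(m_1,k_1))>\frac27k_1\log k_1$ for $k_1\ge\kappa$. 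This bound applies because no term of the first block is prime: such a prime would satisfy $a_1<p\le A$. Your Sylvester bound $k_1<A$ is too weak to absorb this term with these estimates.

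Second, your size comparison in step two points the wrong way. You derive $k_1\log m_1\ll_t A\log A$, which is weaker than what abc already gives and closes nothing. What is needed is the reverse inequality
$A\log A-A\le\log A!\le(k_1+k_2)\log(2m_1)$.
It uses three facts: $A!$ is itself a factor on the left; $n_2\le n_1$; and $n_1<2m_1$ because $m_1\ge k_1$, which follows from Bertrand's postulate and the absence of primes in the first block. This is exactly where $k_2\le ck_1$ enters. It gives $A\log A-A\le(1+c)k_1\log(2m_1)\ll A$, so $A$ is bounded. Then $k_1\log m_1\ll A$ bounds $k_1$, $m_1$ and $k_2$, hence all variables; the case $k_1<\kappa$ is handled by the same inequalities. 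Also, $m_2=a_i+1\le m_1$ automatically, so your separate treatment of a large second block is unnecessary. In your own terms, this lower bound says $\log m_1\gg A\log A/k_1$. So the pair's radical $\exp\bigl(O(A/k_1+\log k_1)\bigr)$ is indeed $m_1^{o(1)}$. The saving comes from $A\log A$ against $A$, obtained by a global count, not from any special arithmetic shape of the two chosen terms.
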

	\noindent
	The proof of Theorem 1.1 shows that the argument can be generalized to the case where $2<s<t$. Let $I=\{2,...,t\}$ and denote by $I_{s-1}$ the set of all subsets $\{i_2,...,i_{s}\}\subset I$ consisting of $s-1$ distinct elements of $I$. Assume that for any $j=2,...,s$ there exists an $i_j$ such that $n_j>a_{i_j}$. Then the diophantine equations simplifies to
	\begin{eqnarray}
	\prod_{i\notin\{1,i_2,...,i_{s}\}}a_i!= \prod_{j=1}^s\Delta(m_j,k_j),
	\end{eqnarray}
	where $m_j=a_{i_j}+1$ and $k_j=n_j-a_{i_j}$ for $j=2,...,s$. Notice that $m_1=a_1+1$ and $k_1=n_1-a_1$. Let $\mathrm{max}\{k_2,...,k_s\}=k$ and let $c>0$ be an arbitrary but fixed positive integer.  We define the following set
	$$
	\mathcal{N}(c)_{\{i_2,...,i_{s}\}}=\{\textnormal{nontrivial solutions to (2)}\mid n_1>a_1, n_j>a_{i_j} \quad \textnormal{and} \quad k/k_1\leq c \}.
	$$
	Then we define 
	$$
	\mathcal{N}(c)=\bigcup_{\{i_2,...,i_{s}\}\in I_{s-1}}\mathcal{N}(c)_{\{i_2,...,i_{s}\}}.
	$$
	\begin{thm}
		The explicit abc conjecture implies that (2) has only finitely many solutions $(a_1,...,a_t,n_1,...,n_s)\in \mathcal{N}(c)_{\{i_2,...,i_{s}\}}$.  Hence the explicit abc conjecture implies that
		$$
		\prod_{i=1}^ta_i!=\prod_{j=1}^sn_i!
		$$
		has finitely many solutions $(a_1,...,a_t,n_1,...,n_s)\in \mathcal{N}(c)$.
	\end{thm}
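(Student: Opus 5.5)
The plan is to follow the argument for Theorem 1.1 (itself adapted from Luca, Saradha and Shorey \cite{LUC2}) with $s$ blocks of consecutive integers in place of two. Fix $\{i_2,\dots,i_s\}\in I_{s-1}$ and a solution in $\mathcal{N}(c)_{\{i_2,\dots,i_s\}}$. The structural input is the left-hand side of (2): it is a product of factorials $a_i!$ with $a_i\le a_1=m_1-1$. Hence every prime dividing $\prod_{j=1}^s\Delta(m_j,k_j)$ is $\le a_1<m_1$.

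\textbf{Step 1: both sides lack large primes.} We have $m_j=a_{i_j}+1\le a_1+1=m_1$ for all $j$. So every prime factor of every integer in every block $\{m_j,\dots,m_j+k_j-1\}$ is at most $m_1-1$.

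\textbf{Step 2: the abc input in the longest block.} Apply the explicit abc conjecture $c<N(abc)^{7/4}$ to consecutive integers $x,x+1$ in the first block, via $1+x=x+1$. For each consecutive pair this gives
\[
x<\bigl(N(x(x+1))\bigr)^{7/4}.
\]
To bound radicals, imitate \cite{LUC2}. Among the $k_1$ consecutive integers $m_1,\dots,n_1$, a Sylvester--Erd\H{o}s type counting argument removes the terms carrying the maximal power of each prime $p\le k_1$. This produces two consecutive terms (or terms $x,y$ with small difference, using $y-x=d$) whose product's $k_1$-smooth part is small. Their radical is then bounded by a product of primes in $(k_1,m_1)$, each dividing at most one term of the block.

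Next, use the equation to control the size. Comparing $p$-adic valuations on both sides of (2) for primes $p>k_1$ shows that $\prod_{j}\Delta(m_j,k_j)$ has its large prime content constrained. Here the hypothesis $k=\max_{j\ge2}k_j\le ck_1$ is used: the total number of terms on the right is at most $(1+(s-1)c)k_1$. Therefore the number of primes dividing the blocks, weighted suitably, is $O_{c,s}(k_1)$.

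\textbf{Step 3: bounding $k_1$, then $m_1$.} Step 2 should give an inequality of the form $m_1\le \exp(C(c,s)\,k_1\log k_1)$ from abc. In the other direction, $m_1^{k_1}\le\Delta(m_1,k_1)$ divides a product of factorials of integers $<m_1$. A prime $p\in(m_1/2,m_1)$ occurs in $\prod a_l!$ with exponent at most $t$. Pairing this with the fact that every term of the block is $m_1$-smooth gives, via the prime number theorem, a lower bound forcing $k_1$ to be comparable to $m_1/\log m_1$ unless $m_1$ is bounded. Combining the two inequalities bounds $k_1$, hence $m_1$, hence $n_1$. Then all $a_i<n_1$ and all $n_j\le n_1$ are bounded, and $t$ is bounded too since $2^{t}\le\prod a_i!=\prod n_j!\le (n_1!)^s$. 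This gives finiteness for each $\{i_2,\dots,i_s\}$. Since $I_{s-1}$ is finite, $\mathcal{N}(c)$ is a finite union of finite sets.

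\textbf{Main obstacle.} The hard step is Step 2–3: making the abc input yield a bound that beats the lower bound coming from the Chebyshev/prime-number count. Two points are delicate. With several blocks $\Delta(m_j,k_j)$, $j\ge2$, a large prime may divide a term of block $j$ rather than block $1$. The ratio condition $k/k_1\le c$ is exactly what must keep the combined contribution of all blocks proportional to $k_1$. I would first try to treat $\prod_j\Delta(m_j,k_j)$ as at most $(1+(s-1)c)k_1$ integers below $n_1$ and run the \cite{LUC2} argument verbatim with $k_1$ replaced by that count.
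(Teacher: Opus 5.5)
Your outline has a genuine gap in Steps 2--3.

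\textbf{Step 3 does not close.} The inequalities $m_1\le\exp(Ck_1\log k_1)$ and ``$k_1$ comparable to $m_1/\log m_1$'' (i.e.\ $m_1\asymp k_1\log k_1$) hold simultaneously for every large $k_1$, so combining them bounds nothing. There is also no mechanism for the second inequality. The fact that a prime $p\in(m_1/2,m_1)$ has exponent $\le t$ on the left of (2) only limits how many multiples of $p$ the blocks can contain. Likewise, $m_1^{k_1}\le\Delta(m_1,k_1)\le\prod a_i!$ bounds $k_1$ from \emph{above}, not below.

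\textbf{Step 2 never produces a term with provably small radical.} Bounding the radical of your chosen pair by ``a product of primes in $(k_1,m_1)$'' only gives the trivial $N(x(x+1))\le x(x+1)$, which makes the abc inequality vacuous. The claim that the weighted number of primes in the blocks is $O_{c,s}(k_1)$ is unjustified. The radical of the block is governed by $\theta(a)$, and nothing yet says $a=O(k_1)$.

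\textbf{What works instead.} The paper's argument pivots on $a=\max_{i\notin\{1,i_2,\dots,i_s\}}a_i$, rather than playing $m_1$ against $k_1$.
\begin{itemize}
\item Every prime dividing $\Delta(m_1,k_1)$ is $\le a$, so $\prod_{i<k_1}N(m_1+i)\le\exp(1.00008a+k_1\log k_1)$.
\item By averaging, the two terms with smallest radicals have radical $\le\exp\bigl((1.00008a+k_1\log k_1)/(k_1-1)\bigr)$. Applying abc to them, after dividing by their gcd, gives $k_1\log m_1\ll a+k_1\log k_1$.
\item The key ingredient you are missing is Erd\H{o}s's theorem. No term of $\Delta(m_1,k_1)$ is prime (Lemma 2.3(i)), so $a\ge P(\Delta(m_1,k_1))>\frac27k_1\log k_1$ for $k_1\ge\kappa$. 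This upgrades the abc bound to $k_1\log m_1\ll a$.
\item Only now does the hypothesis $k\le ck_1$ enter, through Lemma 2.3(ii): $a\log a-a\le(k_1+\cdots+k_s)\log(2m_1)\le(1+(s-1)c)k_1\log(2m_1)\ll a$.
\item Hence $a$ is bounded, then $k_1$, then $m_1$. The case $k_1<\kappa$ is handled directly.
\end{itemize}
Sylvester's bound $P(\Delta(m_1,k_1))>k_1$ would not suffice in place of Erd\H{o}s's theorem, because the constants coming from abc are large. You need $P(\Delta(m_1,k_1))/k_1\to\infty$.
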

	\begin{rema2}
	\textnormal{One could make Theorems 1.1 and 1.2 explicit by keeping track of all the constants and coefficients in the estimates that appear in the proofs. But that is not our intention.}
	\end{rema2}
	\noindent
	Let $d(\mathcal{N}(c))$ denote the asymptotic density of $\mathcal{N}(c)$ in $\mathbb{N}^{t+s}$. We consider equation (1). Since $\mathcal{N}(c)_i\not\subset \mathcal{N}(c)_j$ for $i\neq j$, we get $d(\mathcal{N}(c))\geq d(\mathcal{N}(c)_i)$. The next result shows that $\mathcal{N}(c)$ is large in the sense that its asymptotic density is positive. 
	\begin{thm}
		We have $d(\mathcal{N}(c)_i)>0$.
	\end{thm}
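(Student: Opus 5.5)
Throughout I read $d(\mathcal{N}(c)_i)$ literally as the asymptotic density of the set of nontrivial solutions of (1) with $n_1>a_1$, $n_2>a_i$ and $k_2/k_1\leq c$. The plan is to count such solutions inside boxes $[1,X]^{t+2}$ and compare with the total number of lattice points in the box. The essential feature of this set, as opposed to the cone cut out by the inequalities alone, is the equation
$$
\prod_{l\neq 1,i}a_l!=\Delta(m_1,k_1)\Delta(m_2,k_2),
$$
so the count has to be taken over tuples actually satisfying it.

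\textbf{Step 1: parametrise solutions.} I would build solutions by choosing $a_1,a_i$ and $k_1,k_2\geq 2$ freely, subject to $k_2\leq ck_1$. This fixes $m_1,m_2,n_1,n_2$ and hence the right-hand side $R=\Delta(m_1,k_1)\Delta(m_2,k_2)$. Solutions then correspond to factorisations of $R$ as a product of the remaining $t-2$ factorials $a_l!$. To get a large family, I would take some of the $a_l$ equal to $2$, so that the factor $2!^{e}$ absorbs powers of $2$. I would also use the freedom in $t$, which is part of the data since the density is taken in $\mathbb{N}^{t+s}$ with $t>s$ arbitrary, to fit the remaining cofactor.

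\textbf{Step 2: count in boxes.} Next, count the number $S(X)$ of tuples from Step~1 lying in $[1,X]^{t+2}$ and check that the constraint $k_2\leq ck_1$ keeps a fixed positive proportion of the free choices $(k_1,k_2)$. That proportion is at least $\min(c,1)/2$ of the pairs in $[2,X]^2$. This step is routine once Step~1 produces a family.

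\textbf{Main obstacle.} A solution set of a single Diophantine equation in $t+2$ variables has density zero in $\mathbb{N}^{t+2}$. The reason is that for fixed $(a_1,a_i,n_1,n_2)$ the remaining $a_l$ are determined up to finitely many choices by the size of $R$: the $a_l$ are bounded by $n_1$, and $\sum\log a_l!=\log R$. This gives $S(X)=O(X^{4}\cdot X^{\varepsilon})$ against $X^{t+2}$ lattice points. Moreover, Theorem~1.1 already says that under the explicit abc conjecture $\mathcal{N}(c)_i$ is finite.

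So with the literal reading, $d(\mathcal{N}(c)_i)>0$ cannot be proved by this route, and it would contradict Theorem~1.1 if the abc conjecture holds. Before writing the proof, I would therefore check in the paper which density is meant. The natural candidates are the density of $\mathcal{N}(c)_i$ relative to the set of all nontrivial solutions of (1) with $n_1>a_1$ and $n_2>a_i$, or a density in the $(k_1,k_2)$-parameters.

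For the relative density, the argument would be a comparison. The constraint $k_2/k_1\leq c$ is scale-invariant. So it suffices to show that, among solutions counted up to height $X$, the ratio $k_2/k_1$ is not concentrated at $+\infty$. I would try to establish this by exhibiting, for each solution with large $k_2/k_1$, a solution of comparable height with $k_2/k_1\leq c$. The candidate map swaps the roles of the two blocks $\Delta(m_1,k_1)$ and $\Delta(m_2,k_2)$ where the ordering $a_1\geq a_i$ permits it. I expect making such an injection respect both the ordering $a_1\geq a_i$ and nontriviality to be the hardest step.
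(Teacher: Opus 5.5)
\textbf{Your diagnosis of the literal statement is right, and the fault lies with the paper, not with your proposal.}

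The paper's proof, written out for $t=3$, $i=2$, never imposes equation (1). It rescales $x_j=n_j/N$, $y_l=a_l/N$ and computes the volume of the region of $[0,1]^5$ cut out by
\[
x_1\ge x_2,\quad y_1\ge y_2\ge y_3,\quad x_1>y_1,\quad x_2>y_2,\quad x_2-y_2\le c(x_1-y_1).
\]
This gives $V_1-V_2=\frac{1}{60}-\frac{1}{120c}=\frac{2c-1}{120c}>0$. That is the density of exactly the cone you set aside in your first paragraph, not of the set of nontrivial solutions.

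Read literally, the theorem is false, and unconditionally so. You need neither Theorem 1.1 nor your $O(X^{4+\varepsilon})$ count, whose $X^{\varepsilon}$ factor is not justified by the size of $R$ alone. Since $t>s=2$, there is an index $l\notin\{1,i\}$. Once all other coordinates are fixed, (1) determines $a_l!$, hence $a_l$, because factorials are injective on integers $\ge 1$. So $|\mathcal{N}(c)_i\cap[1,X]^{t+2}|\le X^{t+1}$ and the density is $0$.

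\textbf{Your proposed rescue is not what is meant, and it does not work.}

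The relative-density version fails for the reason you already noted. Under the explicit abc conjecture, Theorem 1.1 makes $\mathcal{N}(c)_i$ finite, so its density relative to any infinite ambient set of solutions is again $0$. The block swap is also not an operation on solutions: the pairing $k_1=n_1-a_1$, $k_2=n_2-a_i$ is fixed by the definition, and re-pairing a tuple yields no new element of $\mathcal{N}(c)_i$.

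The abstract shows the intended meaning: a subset of $\mathbb{N}^{t+s}$ of positive density, cut out by the inequalities alone, inside which Theorem 1.1 leaves only finitely many solutions. For that statement your Step 2 is essentially the whole proof. The conditions $n_1\ge n_2$, $a_1\ge\cdots\ge a_t$, $n_1>a_1$, $n_2>a_i$ define a polyhedral cone with nonempty interior. Since $c\ge 1$, the condition $k_2\le ck_1$ still contains the open subcone $n_2-a_i<n_1-a_1$. The nontriviality conditions remove only a density-zero set, so the density is positive. The paper's integrals just make this explicit for $t=3$.
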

	\noindent
	Diophantine equations of the form $a_1!\cdots a_t!=f(x,y)$ have also been studied quite frequently in the literature and we neither want to give a complete list of reference here nor we want to recall the main results with respect to such equations. We refer, for instance, to \cite{EO}, \cite{WT}, \cite{NA1}, \cite{NO} or \cite{NOO} and references therein. We just want to mention that showing that such an equation has finitely many integer solutions is still a widely open problem. In particular, if $f(x,y)$ is an arbitrary reducible and non-homogeneous polynomial. Theorem 1.2 from above essentially treats one special case. Fix a positive integer $k_i>0$ and let $P(x_i)=x_i(x_i+1)\cdots (x_i+k_i-1)$ be an integer polynomial. Now consider the equation
	\begin{eqnarray}
\prod_{i=1}^ta_i!=\prod_{j=1}^sP(x_i)
	\end{eqnarray}
	Again, let $\mathrm{max}\{k_2,...,k_s\}=k$ and $c>0$ be an arbitrary but fixed positive integer. Define
	$$
	\mathcal{L}=\{\textnormal{integer solutions to (3)} \mid x_1\geq \cdots \geq x_s, x_1>a_1\geq \cdots \geq a_t, k/k_1\leq c\}
	$$
	\begin{thm}
		The explicit abc conjecture implies that equation (3) has only finitely many integer solutions $(a_1,...,a_t,x_1,...,x_s)\in \mathcal{L}$.  
	\end{thm}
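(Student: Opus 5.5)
Equation (3) has the same shape as equation (2): the left side is a product of factorials and the right side is a product of $s$ blocks of consecutive integers $\Delta(x_j,k_j)=x_j(x_j+1)\cdots(x_j+k_j-1)$, with $k/k_1\le c$. I will therefore rerun the argument used for Theorem 1.2 (itself modelled on Luca, Saradha and Shorey \cite{LUC2}), with $\Delta(x_j,k_j)$ in place of $\Delta(m_j,k_j)$. There are two differences. The $x_j$ are no longer tied to the $a_i$, and there is no triviality condition. The only structural input I need is $x_1>a_1\ge a_l$ for every $l$, so every prime dividing the left side is at most $a_1<x_1$.

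The plan has four steps.

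\textbf{Step 1 (smoothness).} Since $P(\prod a_i!)\le a_1<x_1$, every term $x_1+r$ with $0\le r\le k_1-1$ is $a_1$-smooth. In particular $x_1$ and $x_1+1$ are both $x_1$-smooth.

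\textbf{Step 2 (a block of length $2$).} If $k_1\ge 2$, apply the explicit abc conjecture to $x_1+1=x_1+1$. This gives
$$x_1+1<N(x_1(x_1+1))^{7/4}\le\Big(\prod_{p\le a_1}p\Big)^{7/4}=e^{\frac74\theta(a_1)}.$$
Used alone this is too weak; it only says $\log x_1\ll a_1$. I will sharpen it in the usual way, by taking several consecutive terms of the block and using the identity $(x+i)(x+j)$ versus $(x+i+j)x$ to produce many abc-triples whose radicals are controlled. More precisely, for $0\le i<j\le k_1-1$, apply abc to
$$(x_1+i)(x_1+j)\;-\;x_1(x_1+i+j)=ij.$$
This yields a bound of the form $x_1^2\ll (k_1^2)^{7/4}N\big(\prod_{r<2k_1}(x_1+r)\big)^{7/4}$. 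The radical of a product of $2k_1$ consecutive integers is bounded by $\Delta/\prod_{p}p^{\,\mathrm{ord}_p\Delta-1}$, and Legendre-type estimates give $\mathrm{ord}_p\Delta(x,k)\ge\lfloor k/p\rfloor$. Combining these should give $x_1\le C(c)\,k_1^{A}$ for some absolute exponent $A$.

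\textbf{Step 3 (size comparison).} Compare the sizes of the two sides of (3). Taking logarithms and using Stirling,
$$\sum_i a_i\log a_i \approx \sum_{j}k_j\log x_j \le s\,c\,k_1\log x_1.$$
Meanwhile the $p$-adic valuations of $a_1!$ for primes $p$ slightly below $a_1$ force the right side to contain the multiples of those primes. Since all prime factors of $\Delta(x_1,k_1)$ are at most $a_1$, the Sylvester--Erd\H{o}s theorem (a product of $k$ consecutive integers greater than $k$ has a prime factor greater than $k$) applied to $\Delta(x_1,k_1)$ gives $a_1>k_1$, or more precisely $a_1\gg k_1\log x_1$ in the Laishram--Shorey form. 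Together with Step 2 this bounds $k_1$. The hypothesis $k/k_1\le c$ then bounds every $k_j$, hence bounds $x_1$, hence all $x_j\le x_1$ and all $a_i<x_1$. Finally, $t$ is bounded because $\prod a_i!\ge 2^t$, so the solution set is finite.

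\textbf{Step 4 (the case $k_1=1$).} Here $c\ge k$, so all $k_j$ are bounded. Then (3) becomes a product of at most $s$ polynomial values of bounded degree equal to a product of factorials with all primes at most $a_1<x_1$. Apply abc to $x_1+1=x_1+1$ together with the smoothness of $x_1$. I expect this case to be the main obstacle: abc only gives $x_1<e^{\frac74\theta(a_1)}$, which does not by itself contradict $x_1>a_1$. I would first try to combine this with the size equation, $\log\prod a_i!\le s\,k\log x_1$, which forces $a_1\log a_1\ll \log x_1\ll a_1$, a contradiction for large $a_1$. If that route is not sharp enough, I would fall back on the Luca argument \cite{LUC}, isolating the largest prime $p\le a_1$ and its exact power on both sides.
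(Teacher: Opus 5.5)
Your global plan matches the paper's, which simply reruns the proof of Theorem 1.2: the block $x_1,\dots,x_1+k_1-1$ consists of $a_1$-smooth integers, abc bounds $\log x_1$, and a size comparison using $k\le ck_1$ closes the argument. However, two steps fail.

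\textbf{Step 4 cannot be repaired, because the statement is false for $k_1=1$.} Take $s=2$, $t=3$, $k_1=k_2=1$, $a_1=a_2=a_3=a\ge 2$, $x_1=(a!)^2$, $x_2=a!$. This is an infinite family in $\mathcal{L}$. The concrete error is this: when $k_1=1$, the number $x_1+1$ is not a factor of the right side of (3). So $N(x_1+1)$ is uncontrolled, and abc applied to $x_1+1=x_1+1$ gives nothing like $x_1<e^{\frac74\theta(a_1)}$. Your Step 1 assertion about $x_1+1$ already presupposes $k_1\ge 2$. The paper's argument tacitly requires $k_1\ge2$: it uses $k_1/(k_1-1)\le 2$, inherited from the convention $k_j\ge 2$ for nontrivial solutions in Theorems 1.1--1.2. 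You should impose this hypothesis rather than try to prove the case $k_1=1$.

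\textbf{For $k_1\ge 2$, the crux is never proved.} What you need is $k_1\log x_1\ll a_1+k_1\log k_1$.
\begin{itemize}
\item In Step 2, the identity $(x_1+i)(x_1+j)-x_1(x_1+i+j)=ij$ involves $x_1+i+j$, which leaves the block once $i+j\ge k_1$. Your bound contains $N\bigl(\prod_{r<2k_1}(x_1+r)\bigr)$, and half of those factors are not known to be smooth.
\item Radicals of block terms are controlled only through $\theta(a_1)$. So no bound $x_1\le C(c)k_1^A$ free of $a_1$ can come out of this.
\item Step 3 invokes ``$a_1\gg k_1\log x_1$ in the Laishram--Shorey form''. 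No such result is known unconditionally: for $k_1=2$ it would say $P(x(x+1))\gg\log x$, which is open. This inequality is exactly what abc has to supply.
\end{itemize}

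The missing idea is to average radicals over the block. Primes $\ge k_1$ divide at most one term, and primes $p<k_1$ divide at most $\lfloor k_1/p\rfloor+1$ terms. So by Lemma 2.1,
\[
\prod_{r=0}^{k_1-1}N(x_1+r)\le \exp\bigl(1.00008\,a_1+k_1\log k_1+1.00008\,k_1\bigr).
\]
Hence the two terms $x_1+j_1,\,x_1+j_2$ of smallest radical each have radical at most the $(k_1-1)$-th root of this. Apply abc to $(x_1+j_1)/d-(x_1+j_2)/d=(j_1-j_2)/d$ with $d=\gcd(x_1+j_1,j_1-j_2)$; since $k_1/(k_1-1)\le 2$, this gives the needed inequality.

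Every block term exceeds $a_1$ yet is $a_1$-smooth, so every block term is composite. Erd\H{o}s' bound $a_1\ge P(\Delta(x_1,k_1))>\frac27k_1\log k_1$ for $k_1\ge\kappa$ then absorbs the $k_1\log k_1$ term; for $k_1<\kappa$ that term is $O(1)$.

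Your size comparison then bounds $a_1$:
\[
a_1\log a_1-a_1\le\sum_j k_j\log(x_j+k_j)\le sc\,k_1\log((c+1)x_1),
\]
using $k_1\le x_1$, which follows from Bertrand as in Lemma 2.2. Once $a_1$ is bounded, so are $k_1<a_1$, every $k_j\le ck_1$, and $x_1$.
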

	\noindent
	The proof is in fact the same as for Theorem 1.2 since the statement of Theorem 1.4 is just a reformulation in terms of polynomials. 
	\section{Preliminaries}
	We first state a few well known facts from number theory. As mentioned in the introduction, $P(n)$ denotes the greatest prime factor of $n$. We put $P(1)=1$. Let $\Delta(x,k)=x(x+1)\cdots (x+k-1)$. Erd\H{o}s \cite{ERD} proved that there exists a large number $\kappa$ such that
	$$
	P(\Delta(x,k))>\frac{2}{7}k\mathrm{log}(k)\quad \textnormal{for} \quad k\geq \kappa
	$$
	whenever $x,x+1,...,x+k-1$ are all composite integers. In fact, Erd\H{o}s proved the result for an unknown constant in place of $2/7$. A proof with $2/7$ can be found in \cite{LUC2}, Lemma 7. Next, we recall some well known estimates. Fix a positive real number $\nu>1$. Let $\theta(\nu)=\sum_{p\leq \nu}\mathrm{log}(p)$.
	\begin{lem}
		We have
		\begin{itemize}
			\item [(i)] $\theta(\nu)<1.00008\nu$ \textnormal{for} $\nu>1$.
			\item[(ii)] $\underset{p\leq \nu}{\sum}\frac{\mathrm{log}(p)}{p}<\mathrm{log}(\nu)$ \textnormal{for} $\nu>1$. 
		\end{itemize}
		\end{lem}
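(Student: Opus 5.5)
These are the classical Chebyshev-type estimates. The plan is to quote the sharp explicit bounds of Rosser and Schoenfeld, or derive them from them, rather than prove them from scratch. For (i), the statement is exactly the known bound $\theta(\nu)<1.000081\nu$ (Rosser--Schoenfeld, Theorem 9 and its sequel), or, with the constant $1.00008$, the bound due to Schoenfeld, valid for all $\nu>0$. I would cite that result directly.

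If a self-contained argument is preferred, I would proceed in two ranges. For large $\nu$, say $\nu\ge 10^{8}$, I would use the explicit error term in the prime number theorem, $|\theta(\nu)-\nu|<\epsilon(\nu)\nu$ with $\epsilon(\nu)\le 8\cdot 10^{-5}$. For small $\nu$, the bound is checked by direct computation. The main obstacle is this explicit-constant step, since the constant $1.00008$ requires zero-free region and zero-verification input; there is no elementary route to it. I would therefore simply cite it. A cruder elementary bound, such as $\theta(\nu)<\nu\log 4$ from the binomial coefficient $\binom{2m+1}{m}$, would suffice for finiteness purposes but would not give the stated constant.

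For (ii), I would start from Mertens' identity. Legendre's formula gives
$$\log(\lfloor\nu\rfloor!)=\sum_{p\le \nu}\log p\sum_{r\ge1}\lfloor \nu/p^r\rfloor\ \ge\ \sum_{p\le\nu}\log p\,\Big(\frac{\nu}{p}-1\Big)=\nu\sum_{p\le\nu}\frac{\log p}{p}-\theta(\nu).$$
Stirling's bound gives $\log(\lfloor\nu\rfloor!)\le \nu\log\nu-\nu+1+\tfrac12\log(\nu)+\ldots$, more precisely $\log(N!)\le N\log N-N+\log N+1$. Combining the two yields
$$\sum_{p\le\nu}\frac{\log p}{p}\le \log\nu-1+\frac{\theta(\nu)}{\nu}+\frac{\log\nu+1}{\nu}.$$
By (i), $\theta(\nu)/\nu<1.00008$, so the right-hand side is $\log\nu+0.00008+(\log\nu+1)/\nu$. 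This is too weak by itself, so I would refine the Stirling step. Writing $N=\lfloor\nu\rfloor$, we have $\log N!\le N\log\nu-N+\tfrac12\log(2\pi N)+1/(12N)$. The surplus $\nu-N$ together with the dropped terms $r\ge2$ in Legendre's formula then absorbs the error for $\nu$ beyond a modest threshold. Alternatively, I would cite Rosser--Schoenfeld (3.24), $\sum_{p\le\nu}\log p/p>\log\nu-E-1/(2\log\nu)$ paired with the upper bound $\sum_{p\le \nu}\log p/p<\log\nu+E'$, where $E=1.332\ldots$. Note that the true asymptotic is $\log\nu-E$ with $E>0$, so there is ample room for large $\nu$.

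The remaining finite range of small $\nu$ is handled by direct verification. At primes the sum jumps, and between primes $\log\nu$ increases, so it suffices to check $\sum_{p\le q}\log p/p<\log q$ at each prime $q$ up to the threshold. For example, at $q=2$ the sum is $0.3466<0.693$, and the gap only widens, matching $\log\nu-E$. The only delicate point, as in (i), is the explicit constant bookkeeping; here the margin $E\approx1.33$ makes it comfortable.
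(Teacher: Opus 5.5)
Your plan matches the paper's, whose proof consists only of citations: Dusart for (i) and Rosser--Schoenfeld for (ii).

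\textbf{On (i).} Schoenfeld's bound $\theta(\nu)<1.000081\nu$ is slightly \emph{weaker} than the stated $\theta(\nu)<1.00008\nu$. So it is not exactly the statement and does not imply it. You need the sharper estimate the paper takes from Dusart, although any constant would do for the later finiteness argument.

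\textbf{On (ii).} Your optional Legendre--Stirling derivation from (i) is a sound alternative to pure citation. However, it is the prime-power part $r\ge2$ of Legendre's formula that absorbs the error $0.00008\,\nu+O(\log\nu)$, since this part is at least $\log 2\,\lfloor\nu/4\rfloor$. The surplus $\nu-\lfloor\nu\rfloor$ contributes nothing, because it vanishes at the critical points $\nu=q$ prime. In the citation variant, only the Rosser--Schoenfeld \emph{upper} bound $\log\nu+E+1/(2\log\nu)$ with $E\approx-1.3326$ matters, valid for $\nu$ beyond an explicit point. The lower bound you quote plays no role. Your reduction of the finite range to checking each prime $q$ is correct.
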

		\begin{proof}
			For a proof of (i) see \cite{DU} and \cite{DU1}. For (i) see \cite{RO}. 
		\end{proof}
		\begin{lem}
			Consider equation (1). 
			\begin{itemize}
				\item [(i)] None of the terms in $\Delta(m_1,k_1)$ is a prime.
				\item[(ii)] Let $n_2>a_i$ and set $a=\underset{l\neq i,1}{\mathrm{max}}\{a_l\}$. Then 
				$$
				a\cdot\mathrm{log}(a)-a\leq \mathrm{log}(a!)\leq (k_1+k_2)\mathrm{log}(2m_1).
				$$
			\end{itemize}
		\end{lem}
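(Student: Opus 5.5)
For (i), I would argue by contradiction. Suppose some term $m_1+j$ of $\Delta(m_1,k_1)$ with $0\le j\le k_1-1$ is a prime $p$. Since $m_1=a_1+1$, we have $p\ge a_1+1>a_l$ for every $l$, so $p$ divides none of the factorials $a_l!$ on the left-hand side of the original equation $\prod a_l!=n_1!n_2!$. On the right-hand side, however, $p\le m_1+k_1-1=n_1$, so $p\mid n_1!$. Hence $p$ divides the right side but not the left, which is a contradiction. Equivalently, in the form (1), the prime $p$ divides $\Delta(m_1,k_1)$ but none of the $a_l!$ with $l\ne i,1$, because all of those satisfy $a_l\le a_1<p$. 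The only subtlety is to read (1) correctly, with the left side $\prod_{l\neq i,1}a_l!$.

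For (ii), the lower bound is the standard estimate $\log(a!)=\sum_{r\le a}\log r\ge\int_1^a\log x\,dx=a\log a-a+1$, which gives $\log(a!)\ge a\log a-a$.

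For the upper bound I would use (1) directly. Since $a\le a_1$, we have $\log(a!)\le \log\prod_{l\ne i,1}a_l!=\log\Delta(m_1,k_1)+\log\Delta(m_2,k_2)$. Each factor of $\Delta(m_1,k_1)$ is at most $m_1+k_1-1=n_1$. For the upper bound in terms of $2m_1$ I need $k_1\le m_1$, i.e.\ $n_1\le 2a_1+1$. I would obtain this from Bertrand's postulate: if $k_1>m_1$, then the interval $[m_1,2m_1]$ contains a prime, and that prime is a term of $\Delta(m_1,k_1)$. This contradicts (i). So $k_1\le m_1$, every term of $\Delta(m_1,k_1)$ is $<2m_1$, and $\log\Delta(m_1,k_1)\le k_1\log(2m_1)$.

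For $\Delta(m_2,k_2)$, note $m_2=a_i+1\le a_1+1=m_1$. I need $m_2+k_2-1=n_2\le 2m_1$. Since $n_2\le n_1$ and we just showed $n_1<2m_1$, every term is at most $n_2\le n_1<2m_1$. Hence $\log\Delta(m_2,k_2)\le k_2\log(2m_1)$. Adding the two bounds gives $\log(a!)\le (k_1+k_2)\log(2m_1)$.

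The main obstacle is the Bertrand step giving $k_1\le m_1$. The key is to notice that part (i) forces the block $m_1,\dots,n_1$ to be prime-free, and Bertrand's postulate then caps its length. Everything else is bookkeeping with $n_2\le n_1$.
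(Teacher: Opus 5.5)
Your proof is correct and follows essentially the same route as the paper. For (i), a prime term $p\ge a_1+1$ divides the right-hand side but no $a_l!$; for (ii), Bertrand's postulate together with (i) forces $k_1\le m_1$, so every factor of $\Delta(m_1,k_1)\Delta(m_2,k_2)$ is at most $n_1<2m_1$. You also spell out the use of $n_2\le n_1$ and the integral bound for $\log(a!)$, which the paper leaves implicit.

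One small wording fix: $a!\le\prod_{l\ne i,1}a_l!$ holds because $a!$ is one of those factors and the others are $\ge 1$, not because $a\le a_1$.
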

		\begin{proof}
			Assume $m_1+d=p$ is prime for some $0\leq d<k_1$. Then 
			$$
			a_1+1\leq m_1+d=p\leq a.
			$$
			But this contradicts $a_1\geq a_2\geq \cdots \geq a_t$. We now prove (ii). Suppose $m_1<k_1$. By Bertrand's postulate, there is a prime in $((m_1+k_1-1)/2, m_1+k_1-1)$. But this interval is contained in $[m_1,m_1+k_1-1]$, which is a contradiction, since none of the terms in $\Delta(m_1,k_1)$ is a prime according to (i). This shows $m_1\geq k_1$. By (1), we obtain
			$$
			a\cdot \mathrm{log}(a)-a\leq \mathrm{log}(a!)\leq \mathrm{log}(m_1+k_1)^{k_1+k_2}\leq (k_1+k_2)\mathrm{log}(2m_1).
			$$
		\end{proof}
		\noindent
		In the same way one can prove:
		\begin{lem}
				Consider equation (2). 
			\begin{itemize}
				\item [(i)] None of the terms in $\Delta(m_1,k_1)$ is a prime.
				\item[(ii)] Let $a=\underset{i\notin\{1,i_2,...,i_s\}}{\mathrm{max}}\{a_i\}$. Then 
				$$
				a\cdot\mathrm{log}(a)-a\leq \mathrm{log}(a!)\leq (k_1+\cdots+ k_s)\mathrm{log}(2m_1).
				$$
			\end{itemize}
		\end{lem}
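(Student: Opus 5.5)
The plan is to copy the proof of Lemma 2.2 and replace the two blocks $\Delta(m_1,k_1)\Delta(m_2,k_2)$ by the $s$ blocks $\Delta(m_1,k_1)\cdots\Delta(m_s,k_s)$ of (2).

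For (i), suppose some term $m_1+d$ with $0\le d<k_1$ is a prime $p$. Then
$$
a_1+1=m_1\le m_1+d=p\le m_1+k_1-1=n_1.
$$
So $p$ divides $n_1!$, hence divides the right-hand side of (2). It must therefore divide the left-hand side $\prod_{i\notin\{1,i_2,\dots,i_s\}}a_i!$. This forces $p\le a_i$ for some index $i$ in that product, and since $a_i\le a_1$ we get $p\le a_1<p$, a contradiction. This is exactly the argument of Lemma 2.2(i); the extra factors $\Delta(m_j,k_j)$ for $j\ge 2$ play no role.

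For (ii), I would first show $m_1\ge k_1$. If instead $m_1<k_1$, then $(m_1+k_1-1)/2<m_1+k_1-1$, and Bertrand's postulate gives a prime in the interval $\bigl((m_1+k_1-1)/2,\,m_1+k_1-1\bigr)$. Because $m_1<k_1$, this interval lies inside $[m_1,m_1+k_1-1]$, which contradicts (i).

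Next I need every term of every block to be at most $2m_1$, and this is the one step needing more care than in Lemma 2.2. For $j\ge 2$ the terms of $\Delta(m_j,k_j)$ are at most
$$
m_j+k_j-1=n_j\le n_1=m_1+k_1-1<2m_1.
$$
Here $n_j\le n_1$ comes from the ordering $n_1\ge\cdots\ge n_s$, and the final inequality uses $k_1\le m_1$. The terms of $\Delta(m_1,k_1)$ are at most $n_1<2m_1$ as well.

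Taking logarithms in (2) and using $a!$ as a divisor of the left-hand side then gives
$$
\log(a!)\le\sum_{j=1}^s k_j\log(n_1)\le (k_1+\cdots+k_s)\log(2m_1).
$$
The lower bound $a\log a-a\le\log(a!)$ is the standard estimate from comparing $\log(a!)=\sum_{r\le a}\log r$ with $\int_1^a\log x\,dx$.
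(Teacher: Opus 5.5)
Your proof is correct and follows the paper's route: the paper proves Lemma 2.3 by saying Lemma 2.2's argument carries over. In both, a prime in $[m_1,n_1]$ would have to be at most $a\le a_1<m_1$, Bertrand's postulate then forces $m_1\ge k_1$, and every factor of every block is bounded by $n_1<2m_1$. Your explicit check that the factors of $\Delta(m_j,k_j)$ for $j\ge 2$ are at most $n_j\le n_1$ is precisely the step the paper leaves implicit when it writes $\log(m_1+k_1)^{k_1+k_2}$.
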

	
	\section{Proof of Theorems 1.1 and 1.2}
	\noindent
	We follow the line of argument as in \cite{LUC2} with some minor changes. We give the proof for $s=2$. So we consider equation (1). Without loss of generality, we may assume $a_2>n_2>a_3$. Thus $a=a_2$. We want to prove that $a_2$ is bounded. By Lemma 2.2 (i), no term in $\Delta(m_1,k_1)$ is a prime. Observe that primes $>k_1$ divide at most one term of $\Delta(m_1,k_1)$. For every prime $\leq k_1$, we delete the term in which it appears to the maximum power. Further, all primes dividing $\Delta(m_1,k_1)$ are $\leq a_2$. Thus
	\begin{equation*}
		\begin{split}
	\prod_{i=0}^{k_1-1}N(m_1+i)\leq \left(\underset{k_1\leq p\leq a_2}{\prod} p\right)\left( \underset{p<k_1}{\prod}p^{\lfloor k_1/p \rfloor}\right) \\
	\leq \mathrm{exp}\left( \underset{k_1\leq p\leq a_2}{\sum}\mathrm{log}(p)+k_1\underset{p<k_1}{\sum}\frac{\mathrm{log}(p)}{p}\right).
\end{split}
\end{equation*}
By Lemma 2.1, we get
$$
\prod_{i=0}^{k_1-1}N(m_1+i)\leq \mathrm{exp}(1.00008a_2+k_1\cdot \mathrm{log}(k_1)).
$$
Choose $m_1+j_1$ and $m_1+j_2$ such that $N(m_1+j_1)\leq N(m_1+j_2)$ are the smallest among the $N(m_1+i)$ for $0\leq i<k_1$. Then
$$
N(m_1+j_2)\leq \left(\prod_{i=0}^{k_1-1}N(m_1+i) \right)^{1/(k_1-1)}\leq \mathrm{exp}\left(\frac{1.00008a_2}{k_1-1}+\frac{k_1\cdot \mathrm{log}(k_1)}{k_1-1} \right).
$$
Now consider 
$$
\frac{m_1+j_1}{d}-\frac{m_1+j_2}{d}=\frac{j_1-j_2}{d},
$$	
where $d=\mathrm{gcd}(m_1+j_1,j_1-j_2)$. Applying the explicit abc conjecture yields
$$
\frac{m_1}{d}\leq \left( N(m_1+j_1)N(m_1+j_2)\cdot \frac{|j_1-j_2|}{d}\right)^{7/4}.
$$
Hence
$$
\mathrm{log}(m_1)\leq \frac{7}{4}\left(\frac{2.00016a_2}{k_1-1}+\frac{2k_1\cdot \mathrm{log}(k_1)}{k_1-1}+\mathrm{log}(k_1) \right)
$$
and therefore
\begin{equation}
k_1\cdot \mathrm{log}(m_1)\leq \frac{7}{4}\left(\frac{k_1\cdot 2.00016a_2}{k_1-1}+\frac{2k_1^2\cdot \mathrm{log}(k_1)}{k_1-1}+k_1\cdot \mathrm{log}(k_1) \right).
\end{equation}
Since $k_1\geq 2 $ by assumption, we have $k_1/(k_1-1)\leq 2$. Furthermore, because $P(\Delta(m_1,k_1))>\frac{2}{7}k_1\cdot \mathrm{log}(k_1)$ for $k_1\geq \kappa$ , we conclude 
\begin{equation}
a_2>\frac{2}{7}k_1\cdot \mathrm{log}(k_1) \quad \textnormal{for} \quad k_1\geq \kappa.
\end{equation}
In the remaining part of the proof, we have to distinguish two cases, namely $k_2\leq k_1$ and $k_1<k_2$.\\
\noindent
\emph{the case $k_2\leq k_1$}:\\
\noindent
We assume $k_1\geq \kappa$. From (4) and (5), we conclude that there exists a positive integer constant $c_1$ such that
\begin{equation}
k_1\cdot \mathrm{log}(k_1)\leq c_1\cdot a_2.
\end{equation}
Now use the assumption $k_2\leq k_1$, inequality (6) and Lemma 2.2 (ii) to obtain
$$
a_2\cdot \mathrm{log}(a_2)-a_2\leq (k_1+k_2)\mathrm{log}(2m_1)\leq 2k_1\cdot \mathrm{log}(2m_1)\leq c_2a_2, 
$$
where $c_2$ is a positive integer constant. Resolving to $\mathrm{log}(a_2)$ yields
$$
\mathrm{log}(a_2)\leq c_3
$$
for some constant $c_3>0$. Now consider the case $k_1<\kappa$. We may assume $a_2>c_4\kappa\cdot \mathrm{log}(\kappa)$ for a suitable constant $c_4>0$, since otherwise the result follows. Then (4) yields
$$
k_1\cdot \mathrm{log}(m_1)\leq c_5a_2+c_6k_1\cdot \mathrm{log}(k_1),
$$
where $c_5,c_6$ are positive constants. Again, we use Lemma 2.2 (ii) to get
$$
a_2\cdot \mathrm{log}(a_2)\leq c_7a_2+c_8k_1\cdot\mathrm{log}(k_1)
$$
and hence
$$
\mathrm{log}(a_2)\leq c_7+\frac{c_8}{c_4}\frac{k_1\cdot \mathrm{log}(k_1)}{\kappa\cdot \mathrm{log}(\kappa)}\leq c_7+\frac{c_8}{c_4}.
$$
This shows that $a_2$ is bounded when $k_2\leq k_1$.\\
\noindent
\emph{the case $k_1<k_2$}:\\
\noindent
Notice that $k_1<k_2$ and $k_2\leq ck_1$ by assumption. Since the boundedness of $k_2/k_1$ is equivalent to the boundedness of $(k_2-1)/(k_1-1)$ if $k_1>1$, we have 
$$
N(m_1+j_2)^{k_2-1}\leq N(m_1+j_2)^{b(k_1-1)}=(N(m_1+j_2)^{k_1-1})^b
$$
for a suitable $b>0$. And since 
$$
N(m_1+j_2)^{k_1-1}\leq \prod_{i=0}^{k_1-1}N(m_1+i),
$$
we get
$$
N(m_1+j_2)^{k_2-1}\leq \left(\prod_{i=0}^{k_1-1}N(m_1+j_2) \right)^b\leq \mathrm{exp}(1.00008\cdot ba_2+bk_1\cdot \mathrm{log}(k_1)).
$$
Therefore
$$
N(m_1+j_2)\leq \mathrm{exp}\left(\frac{1.00008\cdot ba_2}{k_2-1}+\frac{bk_1\cdot \mathrm{log}(k_1)}{k_2-1} \right).
$$
Again, we consider
$$
\frac{m_1+j_1}{d}-\frac{m_1+j_2}{d}=\frac{j_1-j_2}{d},
$$	
where $d=\mathrm{gcd}(m_1+j_1,j_1-j_2)$ and apply the explicit abc conjecture. This gives us
$$
\mathrm{log}(m_1)\leq \frac{7}{4}\left(\frac{2.00016\cdot ba_2}{k_2-1}+\frac{2bk_1\cdot \mathrm{log}(k_1)}{k_2-1}+\mathrm{log}(k_1) \right)
$$
Now we multiply by $k_2$ and find
$$
k_2\cdot \mathrm{log}(m_1)\leq \frac{7}{4}\left(\frac{k_2\cdot 2.00016\cdot ba_2}{k_2-1}+\frac{2k_2k_1b\cdot \mathrm{log}(k_1)}{k_2-1}+ck_1\cdot \mathrm{log}(k_1) \right).
$$
Assuming $k_1\geq \kappa$ and because $k_2/(k_2-1)\leq 2$, we get
$$
k_2\cdot \mathrm{log}(m_1)\leq c_9a_2.
$$
for a suitable positive constant $c_9>0$. Now Lemma 2.2 (ii) states
$$
a_2\cdot \mathrm{log}(a_2)-a_2\leq (k_1+k_2)\mathrm{log}(2m_1)\leq 2k_2\cdot \mathrm{log}(2m_1).
$$
Pluging in $k_2\cdot \mathrm{log}(m_1)\leq c_9a_2$ shows that $a_2$ is bounded. Now if $k_1<\kappa$ proceed as in the proof for the case $k_2\leq k_1$. The details are left to the reader. This completes the proof for the case $s=2$. Theorem 1.2 can be proved in the same way. One just has to use $a$ instead of $a_2$ and $k$ instead of $k_2$. 
\begin{rema}
	\textnormal{We are not sure whether Theorem 1.1 can be proved under some weaker condition, for example Szpiro's conjecture. It is also not clear to us if one can find a nontrivial subset $\mathcal{N}\subset \mathbb{N}^{t+s}$ with density $d(\mathcal{N})>0$, for which one can show, unconditionally, that it contains only finitely many solutions.}
\end{rema}
\section{Proof of Theorem 1.3}
\noindent
Without loss of generality, we consider $A:=\mathcal{N}(c)_2$. To keep notation simple, we prove the assertion for $s=2$ and $t=3$. Now
$$
d(A)=\underset{N\rightarrow \infty}{\mathrm{lim}}\frac{|A\cap \{1,...,N\}^5|}{N^5}=\underset{N\rightarrow \infty}{\mathrm{lim}}\frac{1}{N^5}\underset{n_1,n_2,a_1,a_2,a_3}{\sum}\mathds{1}_{A}(\frac{n_1}{N},\frac{n_2}{N},\frac{a_1}{N}, \frac{a_2}{N},\frac{a_3}{N}), 
$$
where $\mathds{1}$ denotes the indicator function of $A$. We can use integrals to calculate that limit. We set $x_i=n_i/N$ and $y_j=a_j/N$. Then we have the restrictions $x_1\geq x_2$, $y_1\geq y_2\geq y_3$, $x_1>y_1$, $x_2>y_2$ and $(x_2-y_2)\leq c(x_1-y_1)$. The density is calculated as
$$
d(A)=V_1-V_2,
$$
where $V_1$ and $V_2$ are given by
$$
V_1=\int_0^1dx_1\int_0^{x_1}dy_1\int_0^{y_1}dy_2\int_{y_2}^{x_1}dx_2\int_0^{y_2}dy_3=\frac{1}{60}
$$
and
$$
V_2=\int_0^1dx_1\int_0^{x_1}dy_1\int_0^{x_1-c(x_1-y_1)}dy_2\int_{y_2+c(x_1-y_1)}^{x_1}dx_2\int_0^{y_2}dy_3=\frac{1}{120c}.
$$
Here $V_1$ is the volume that is given without the restriction $k_2/k_1\leq c$, or equivalently $(x_2-y_2)\leq c(x_1-y_1)$. Notice that $x_2\leq y_2+c(x_1-y_1)$ implies that we have to cut of the region where $x_2>y_2+c(x_1-y_1)$. And since $x_2\leq x_1$, we get $y_2+c(x_1-y_1)<x_1$. Moreover, this also means that $y_2<x_1-c(x_1-y_1)$. Similar arguments can also be applied for arbitrary $t>0$ to show that the density is indeed $>0$.

	\vspace{0.3cm}
	\noindent
	{\tiny HOCHSCHULE FRESENIUS UNIVERSITY OF APPLIED SCIENCES 40476 D\"USSELDORF, GERMANY.}\\
	E-mail adress: sasa.novakovic@hs-fresenius.de\\
	

\begin{thebibliography}{999}
		\bibitem{BAK} A. Baker, Experiments on the abc-conjecture. Publ. Math. Debrecen 65 (2004), 253-260. 
		\bibitem{DU} P. Dusart, Autour de la fonction qui compte le nombre de nombre premiers. Ph.D thesis, Universit\'e de Limoges (1998)
		\bibitem{DU1} P. Dusart, In\'egalit\'es explicites pour $\psi(X), \theta(X), \pi(X)$ et les nombres premiers. C.R. Math. Acad. Sci. Soc. R. Can. 21 (1999), 53-59.
		\bibitem{EO} P. Erd\H{o}s and R. Obl\'ath, \"Uber diophantische Gleichungen der Form $n!=x^p \pm y^p$ und $n!\pm m!= x^p$. Acta Szeged. 8 (1937), 241-255.
		\bibitem{ERD} P. Erd\H{o}s, On consecutive integers. Nieuw. Arch. Wiskd. 3 (1955), 124-128.
		\bibitem{EG} P. Erd\H{o}s and R.L. Graham, Old and new problems and resolutions in combinatorial number theory, Monography No. 28 L'Enseignement Math. Geneve (1980). 
		
		\bibitem{LS} S. Laishram and T.N. Shorey, Baker's explicit abc-conjecture and applications. Acta Arith. 155 (2012), 419-429.
		\bibitem{LUC} F. Luca, On factorials which are products of factorials. Math. Proc. Camb. Phil. Soc. (2007), 143-533.
		\bibitem{LUC2} F. Luca, N. Saradha and T.N. Shorey, Squares and factorials in products of factorials. Monatsh. Math. (2014), 385-400.  
		\bibitem{NA1} A. Makki Naciri, On the variant $Q(n!)=P(x)$ of the Brocard--Ramanujan Diophantine equation. Ramanujan J. 65 (2024), 1791-1798.
		\bibitem{NS} S. G. Nair and T.N. Shorey, Lower bounds for the greatest prime factor product of consecutive positive integers. J. Number Theory 159 (2016), 307-328.
		\bibitem{NO} S. Novakovi\'c, A note on some polynomial-factorial Diophantine equations. Glasnik Matematicki 60 (2025), 21-38.
		\bibitem{NOO} S. Novakovi\'c, On a generalization of the Brocard--Ramanujan Diophantine equation, arXiv:2602.09678
		\bibitem{RO} J.B. Rosser and L. Schoenfeld, Approximate formulas for some functions of prime numbers. Illinois J. of Math. 6 (1962), 64-94.
		\bibitem{WTT} W. Takeda, Finiteness of trivial solutions of factorial products yielding a factorial over number fields. Acta Arith. 190 (2019), 395-401.
		\bibitem{WT} W. Takeda, On the finiteness of solutions for polynomial-factorial Diophantine equations. Forum Math. 33 (2021), 361-374.
		\bibitem{WT2} W. Takeda, Product of Factorials Equal Another Product of Factorials. Bull. Iranian Math. Soc. 50 (2024), 1-23.
		
	\end{thebibliography}
	\end{document}